\documentclass[11pt,reqno,a4paper]{amsart}
\usepackage{amssymb}
\usepackage{amsfonts}
\usepackage{amsmath, amsthm, amssymb}
\usepackage[english]{babel}
\usepackage[mathcal]{eucal}

\usepackage{pdfsync}
\allowdisplaybreaks

\title[Uniformly continuous linear extension in TVS]{A uniformly continuous linear extension principle in topological vector spaces with an application to Lebesgue integration}
\author{Ben Berckmoes}
\address{Departement Wiskunde-Informatica, Middelheimcampus, Middelheimlaan 1, 2020 Antwerp, Belgium}
\email{ben.berckmoes@ua.ac.be}
\date{}

\subjclass[2000]{28C05}
\keywords{uniform space, topological vector space, uniformly continuous, linear extension, convergence in measure, Lebesgue integral, uniformly integrable, Vitali, Riesz-Fischer}
\thanks{The author is PhD fellow at the Fund for Scientific Research of Flanders (FWO)}

\DeclareMathOperator*{\mymax}{ma\vphantom{p}x}

\begin{document}

\maketitle

\hyphenation{ex-ten-da-ble}
\hyphenation{Pro-po-si-tion}
\hyphenation{Ge-bie-te}

\newtheorem{pro}{Proposition}[section]
\newtheorem{lem}[pro]{Lemma}
\newtheorem{thm}[pro]{Theorem}
\newtheorem{de}[pro]{Definition}
\newtheorem{co}[pro]{Comment}
\newtheorem{no}[pro]{Notation}
\newtheorem{vb}[pro]{Example}
\newtheorem{vbn}[pro]{Examples}
\newtheorem{gev}[pro]{Corollary}

\begin{abstract}
We prove a uniformly continuous linear extension principle in topological vector spaces from which we derive a very short and canonical construction of the Lebesgue integral of Banach space valued maps on a finite measure space. The Vitali Convergence Theorem and the Riesz-Fischer Theorem are shown to follow as easy consequences from our construction.
\end{abstract}

\section{Introduction and motivation}

Since the birth of Lebesgue's theory of integration, various introductions and modifications of the theory have been proposed in an attempt to
 \begin{enumerate}
  \item make it more elementary (\cite{Mikusinski}),
  \item unify it with Riemann's approach to integration (\cite{Gordon},\cite{Kurzweil},\cite{McShane},\cite{Schurle}),
  \item extend it to a Banach space valued context (\cite{Birkhoff},\cite{Cascales},\cite{Rodriguez}),
  \item present it in the abstract setting of functional analysis (\cite{BourbakiInt},\cite{Lang}).
 \end{enumerate}
 
In this paper we shall present a very short and canonical construction of the Lebesgue integral via a uniformly continuous linear extension principle in topological vector spaces (Theorem \ref{thm:LETVS}) which addresses the above mentioned topics. The Vitali Convergence Theorem and the Riesz-Fischer Theorem are shown to follow as easy consequences from our construction.

\section{Uniformly continuous linear extension in TVS}

The following lemma is a well known result in the theory of uniform spaces, see e.g. \cite{BourbakiTop}.

\begin{lem}\label{lem:ExtensionUniformSpaces}
Let $X$ be a uniform space, $A \subset X$ a dense subset and $f$ a uniformly continuous map of $A$ into a complete Hausdorff uniform space. Then there exists a unique uniformly continuous extension of $f$ to $X$.
\end{lem}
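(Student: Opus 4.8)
The plan is to settle uniqueness immediately, since it only uses density and the Hausdorff property, and then to spend the real effort on existence, where the delicate point is verifying that the extension inherits uniform continuity and not merely continuity.

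\emph{Uniqueness.} Suppose $g_1$ and $g_2$ are both uniformly continuous extensions of $f$. Since the codomain $Y$ is Hausdorff, its diagonal $\Delta_Y$ is closed in $Y\times Y$, so the coincidence set $\{x\in X : g_1(x)=g_2(x)\}=(g_1,g_2)^{-1}(\Delta_Y)$ is closed in $X$ and contains the dense set $A$; hence it is all of $X$.

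\emph{Existence.} Fix $x\in X$ and let $\mathcal F_x$ be the filter on $A$ generated by the traces $V\cap A$, where $V$ runs over the neighbourhoods of $x$ in $X$ (these traces are nonempty because $A$ is dense). Using a symmetric entourage $W$ of $X$ with $W\circ W$ inside a prescribed entourage, one checks that $W[x]\cap A$ is small of that order, so $\mathcal F_x$ is a Cauchy filter on $A$. As $f$ is uniformly continuous, the image filter $f(\mathcal F_x)$ is Cauchy on $Y$; by completeness it converges, and by the Hausdorff property its limit is unique, so we may define $\bar f(x)$ to be that limit. For $x\in A$, uniform continuity of $f$ forces continuity of $f$ at $x$, which gives $f(\mathcal F_x)\to f(x)$, and hence $\bar f(x)=f(x)$ by uniqueness of limits; thus $\bar f$ genuinely extends $f$.

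\emph{Uniform continuity of $\bar f$, and the main obstacle.} Given an entourage $U'$ of $Y$, I would choose a symmetric entourage $V'$ of $Y$ with $V'\circ V'\circ V'\subseteq U'$, then use uniform continuity of $f$ to get an entourage $W$ of $X$ with $(f(a),f(b))\in V'$ whenever $a,b\in A$ and $(a,b)\in W$, and finally pick a symmetric open entourage $W_0$ of $X$ with $W_0\circ W_0\circ W_0\subseteq W$. For $(x,y)\in W_0$ one exploits that $\bar f(x)$ is the limit of $f$ along $\mathcal F_x$, a filter with a basis of sets contained in $W_0[x]\cap A$: there is $a\in W_0[x]\cap A$ with $(\bar f(x),f(a))\in V'$, and symmetrically $b\in W_0[y]\cap A$ with $(f(b),\bar f(y))\in V'$. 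Then $(a,b)\in W_0\circ W_0\circ W_0\subseteq W$ gives $(f(a),f(b))\in V'$, and chaining the three relations yields $(\bar f(x),\bar f(y))\in V'\circ V'\circ V'\subseteq U'$, as required. The crux of the whole argument is precisely this last step: one must manufacture a modulus of uniform continuity for $\bar f$ on all of $X$ out of the given modulus for $f$ on $A$, and it is here that density (to produce the approximating points $a,b$) together with an adequate composition of entourages (equivalently, closedness of a suitable $V'$) are indispensable.
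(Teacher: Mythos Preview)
Your proof is correct and is the standard argument. The paper does not actually prove this lemma: it is stated as a well-known fact with a reference to Bourbaki's \emph{Topologie g\'en\'erale}, so there is no in-paper proof to compare against. Two minor cosmetic remarks: you do not need $W_0$ to be open, since $W_0[x]$ is a neighbourhood of $x$ for any entourage $W_0$; and the phrase ``a filter with a basis of sets contained in $W_0[x]\cap A$'' is slightly off (the trace filter $\mathcal F_x$ is not generated by subsets of $W_0[x]\cap A$), but all you actually use is that $W_0[x]\cap A\in\mathcal F_x$, which is immediate.
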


We say that a collection $\mathcal{K}$ of subsets of a complex vector space is {\em closed under the formation of finite linear combinations} iff the set 
\begin{displaymath}
\alpha K_1 + \beta K_2 = \left\{\alpha x + \beta y \mid x \in K_1, y \in K_2\right\} 
\end{displaymath}
belongs to $\mathcal{K}$ for all $K_1,K_2 \in \mathcal{K}$ and $\alpha,\beta \in \mathbb{C}$. We now apply Lemma \ref{lem:ExtensionUniformSpaces} to obtain the following uniformly continuous linear extension principle in complex topological vector spaces (TVS).
 
\begin{thm}\label{thm:LETVS}{\em(Uniformly continuous linear extension principle in TVS)}
Let $E$ be a TVS, $F \subset E$ a vector subspace, $\mathcal{K}$ a collection of sets $K \subset F$ which covers $F$ and is closed under the formation of finite linear combinations. Let $\lambda$ be a linear map of $F$ into a complete Hausdorff TVS $E^\prime$ which is uniformly continuous on each $K \in \mathcal{K}$. Then $\widetilde{F} = \cup_{K \in \mathcal{K}} \overline{K}$ is a vector space containing $F$ and there exists a unique linear extension of $\lambda$ to $\widetilde{F}$ which is uniformly continuous on the closure of each $K \in \mathcal{K}$.
\end{thm}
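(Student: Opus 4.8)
I would handle the structural assertion directly and obtain the extension by gluing, over all $K\in\mathcal K$, the uniformly continuous extensions supplied by Lemma~\ref{lem:ExtensionUniformSpaces} on the sets $\overline K$.

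\emph{Setting up.} Since $\mathcal K$ covers $F$ and $K\subseteq\overline K$, we have $F\subseteq\widetilde F$. To see that $\widetilde F$ is a subspace, fix $u\in\overline{K_1}$, $v\in\overline{K_2}$ and $\alpha,\beta\in\mathbb C$: the continuous map $(x,y)\mapsto\alpha x+\beta y$ sends $\overline{K_1}\times\overline{K_2}=\overline{K_1\times K_2}$ into $\overline{\alpha K_1+\beta K_2}$, and $\alpha K_1+\beta K_2\in\mathcal K$, so $\alpha u+\beta v\in\widetilde F$. Next, for each $K\in\mathcal K$ I equip $\overline K$ with the uniformity induced from $E$; then $K$ is dense in $\overline K$, $\lambda|_K$ is uniformly continuous, and $E'$ is complete Hausdorff, so Lemma~\ref{lem:ExtensionUniformSpaces} yields a unique uniformly continuous $\lambda_K\colon\overline K\to E'$ extending $\lambda|_K$, with $\lambda_K(x)=\lim_i\lambda(a_i)$ for any net $(a_i)$ in $K$ converging to $x\in\overline K$. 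A first application of the uniqueness clause gives $\lambda_{K'}|_{\overline K}=\lambda_K$ whenever $K\subseteq K'$ in $\mathcal K$.

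\emph{Coherence, the main obstacle.} The crux is that $\lambda_{K_1}$ and $\lambda_{K_2}$ agree on $\overline{K_1}\cap\overline{K_2}$. Given $x$ there, pick nets $(a_i)$ in $K_1$ and $(b_j)$ in $K_2$ with $a_i\to x$ and $b_j\to x$, so $\lambda(a_i)\to\lambda_{K_1}(x)$ and $\lambda(b_j)\to\lambda_{K_2}(x)$. Now $(a_i-b_j)$ lies in $K_1-K_2\in\mathcal K$ and tends to $0$, so uniform continuity of $\lambda$ on $K_1-K_2$ forces $\lambda(a_i)-\lambda(b_j)=\lambda(a_i-b_j)\to\lambda_{K_1-K_2}(0)$; hence everything reduces to the identity $\lambda_{K_1-K_2}(0)=0$. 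This is transparent when $0\in K_1-K_2$ (the value being $\lambda(0)=0$), and the general case is exactly the delicate point: it is where the global linearity of $\lambda$ must be played off against the uniform-continuity hypotheses, presumably also using that $K-K\in\mathcal K$ for every $K\in\mathcal K$ and that $0\in K-K$. Granting it, $\lambda_{K_1}(x)=\lambda_{K_2}(x)$.

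\emph{Assembling the extension.} Coherence makes $\widetilde\lambda(x):=\lambda_K(x)$ (for any $K$ with $x\in\overline K$) well defined on $\widetilde F$, and it extends $\lambda$. For linearity, take $x\in\overline{K_1}$, $y\in\overline{K_2}$ and nets as above; then $\alpha a_i+\beta b_j\in\alpha K_1+\beta K_2\in\mathcal K$ converges to $\alpha x+\beta y$, so $\widetilde\lambda(\alpha x+\beta y)=\lim(\alpha\lambda(a_i)+\beta\lambda(b_j))=\alpha\widetilde\lambda(x)+\beta\widetilde\lambda(y)$, using linearity of $\lambda$ on $F$ and continuity of the operations of $E'$. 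By construction $\widetilde\lambda|_{\overline K}=\lambda_K$ is uniformly continuous for each $K$. Finally, uniqueness follows at once from the uniqueness clause of Lemma~\ref{lem:ExtensionUniformSpaces}: any linear extension of $\lambda$ to $\widetilde F$ that is uniformly continuous on each $\overline K$ must restrict to $\lambda_K$ there, hence coincide with $\widetilde\lambda$ on $\widetilde F=\bigcup_{K\in\mathcal K}\overline K$.
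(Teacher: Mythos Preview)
Your argument follows the paper's almost step for step: prove $\widetilde F$ is a subspace via $\alpha\overline{K_1}+\beta\overline{K_2}\subset\overline{\alpha K_1+\beta K_2}$, extend each $\lambda|_K$ to $\overline K$ by Lemma~\ref{lem:ExtensionUniformSpaces}, and glue. For coherence the paper does exactly what you do: with nets $y_V\in K_1$, $z_V\in K_2$ both converging to $x$, it passes to $y_V-z_V\in K_1-K_2\to 0$ and writes
\[
\overline{\lambda|_{K_1}}(x)-\overline{\lambda|_{K_2}}(x)=\lim\lambda(y_V-z_V)=\lim\overline{\lambda|_{K_1-K_2}}(y_V-z_V)=0,
\]
invoking only ``$\overline{\lambda|_{K_1-K_2}}$ is continuous'' for the last equality. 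Your reduction to the identity $\lambda_{K_1-K_2}(0)=0$ is the same reduction; linearity and uniqueness are handled as in the paper.

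The difference is that you stop and say ``Granting it''. That is an honest admission of a real gap in \emph{your} write-up: continuity of $\overline{\lambda|_{K_1-K_2}}$ gives $\lim\lambda(y_V-z_V)=\overline{\lambda|_{K_1-K_2}}(0)$, and identifying this with $\lambda(0)=0$ is immediate only when $0\in K_1-K_2$ (equivalently $K_1\cap K_2\neq\emptyset$). Your suggested remedy, passing to $K-K\ni 0$, only shows that $(\lambda(y_V-z_V))_V$ is Cauchy, not that its limit is $0$; so the hint does not close the gap as stated. The paper itself simply asserts the final ``$=0$'' without isolating this point, so you have in fact been more scrupulous than the source --- but since you explicitly leave the step unproved, your proposal is incomplete precisely there, while the rest is correct and matches the paper.
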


\begin{proof}
For $x,y \in \widetilde{F}$ and $\alpha,\beta \in \mathbb{C}$, choose $K_1, K_2 \in \mathcal{K}$ such that $x \in \overline{K_1}$ and $y \in \overline{K_2}$. Then 
\begin{displaymath}
\alpha x + \beta y \in \alpha \overline{K_1} + \beta \overline{K_2} \subset \overline{\alpha K_1 + \beta K_2} \subset \widetilde{F}
\end{displaymath}
the latter inclusion being a consequence of the fact that $\mathcal{K}$ is closed under the formation of finite linear combinations. We conclude that $\widetilde{F}$ is a vector space, which contains $F$ because $\mathcal{K}$ covers $F$. Furthermore, for each $K \in \mathcal{K}$, Lemma \ref{lem:ExtensionUniformSpaces} shows that  $\lambda \mid_K$ extends uniquely to a uniformly continuous map $\overline{\lambda \mid_K}$ of $\overline{K}$ into $E^\prime$. For $x \in \widetilde{F}$, choose $K \in \mathcal{K}$ such that $x \in \overline{K}$ and put $\widetilde{\lambda}(x) = \overline{\lambda \mid_K}(x)$. The assignment $\widetilde{\lambda}$ is well-defined. Indeed, suppose that $x \in \overline{K_1} \cap \overline{K_2}$ with $K_1,K_2 \in \mathcal{K}$ and let $\mathcal{V}$ be the neighbourhood filter of $0$ in $E$. Then, for each $V \in \mathcal{V}$, choose $y_V \in K_1 \cap \left(x + V\right)$ and $z_V \in K_2 \cap \left(x + V\right)$. Notice that $(y_V)_{V \in \mathcal{V}}$ (resp. $\left(z_V\right)_{V \in \mathcal{V}}$) is a net in $K_1$ (resp. $K_2$) converging to $x$. Now
\begin{align*}
& \overline{\lambda \mid_{K_1}}(x) - \overline{\lambda \mid_{K_2}}(x)\\
& \quad (\overline{\lambda \mid_{K_1}} \textrm{ and } \overline{\lambda \mid_{K_2}} \textrm{ are continuous})\\
& = \lim \overline{\lambda \mid_{K_1}}(y_V) - \lim \overline{\lambda \mid_{K_2}}(z_V)\\
& = \lim \lambda(y_V) - \lim \lambda(z_V)\\
& = \lim \left[\lambda(y_V) - \lambda(z_V)\right]\\
& = \lim \lambda(y_V - z_V)\\
& \quad (K_1 - K_2 \in \mathcal{K})\\
& = \lim \overline{\lambda \mid_{K_1 - K_2}}(y_V - z_V)\\
& \quad (\overline{\lambda \mid_{K_1 - K_2}} \textrm{ is continuous})\\ 
& = 0
\end{align*}
establishing the fact that $\widetilde{\lambda}$ is well-defined. The linearity of $\widetilde{\lambda}$ is proved analogously. We conclude that $\widetilde{\lambda}$ has the desired properties and we are done.
\end{proof}

\section{Construction of the Lebesgue integral}

In this section we give a brief outline of how a very short and canonical construction of the Lebesgue integral can be obtained via Theorem \ref{thm:LETVS}.

Let $\Omega = (\Omega,\mathcal{A},\mu)$ be a finite measure space, $E = (E,\|\cdot\|)$ a complex separable Banach space and ${\bf M}(\Omega,E)$ the TVS of Borel measurable maps $f$ of $\Omega$ into $E$, equipped with the topology of convergence in measure. That is, the sets 
\begin{displaymath}
{\bf V}_{\epsilon} = \left\{ f \in {\bf M}\left(\Omega,E\right) \mid \mu\left(\left\{\|f\| \geq \epsilon\right\}\right) < \epsilon\right\}, \phantom{1} \epsilon > 0, 
\end{displaymath}
constitute a base for the neighbourhood filter of $0$. It is well known that the uniform structure of ${\bf M}\left(\Omega,E\right)$ is complete and pseudometrisable, see e.g. \cite{BourbakiInt}. Unless otherwise stated, all subsets of ${\bf M}(\Omega,E)$ are equipped with the uniformity of convergence in measure.

A map $s \in {\bf M}(\Omega,E)$ is called {\em simple} iff there exists a finite measurable partition $A_1,\ldots,A_n$ of $\Omega$ such that $s$ assumes a unique value $s_i \in E$ on each $A_i$. We denote the collection of simple maps as ${\bf S}(\Omega,E)$. Notice that ${\bf S}(\Omega,E)$ is a vector subspace of ${\bf M}(\Omega,E)$. We define the {\em integral of $s \in {\bf S}(\Omega,E)$} as 
\begin{displaymath}
\int s = \sum_{i} \mu(A_i) s_i
\end{displaymath}
with $A_1,\ldots,A_n$ the measurable partition and $s_1,\ldots,s_n$ the values associated with $s$. 

\begin{pro}\label{pro:PropertiesIntegralOfSimpleMaps}
The mapping $\int$ of  ${\bf S}(\Omega,E)$ into $E$ is linear and, if $s \in {\bf S}(\Omega,E)$, then $\|s\| \in {\bf S}(\Omega,\mathbb{C})$ and $\|\int s\| \leq \int \|s\|$. In particular, if $E = \mathbb{C}$, then the mapping $\int$ of  ${\bf S}(\Omega,\mathbb{C})$ into $\mathbb{C}$ is positive in the sense that $\int s \geq 0$ if $s \geq 0$ and monotonic in the sense that $\int s \geq \int t$ if $s \geq t$.
\end{pro}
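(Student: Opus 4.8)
The plan is first to establish that $\int$ is a well-defined map on ${\bf S}(\Omega,E)$, since a simple map admits many representations by finite measurable partitions, and only afterwards to verify linearity, the norm estimate, and the consequences for $E = \mathbb{C}$. For well-definedness I would argue via common refinements: if $A_1,\ldots,A_n$ and $B_1,\ldots,B_m$ are two finite measurable partitions of $\Omega$ on which $s$ is constant, with associated values $s_i$ and $s'_j$, then $\{A_i \cap B_j\}_{i,j}$ is a finite measurable partition refining both, and on a nonempty piece $A_i \cap B_j$ the map $s$ takes the common value $s_i = s'_j$. Finite additivity of $\mu$ then gives $\sum_i \mu(A_i) s_i = \sum_{i,j} \mu(A_i \cap B_j) s_i = \sum_{i,j} \mu(A_i \cap B_j) s'_j = \sum_j \mu(B_j) s'_j$, the empty pieces contributing $0$ on each side, so that $\int s$ does not depend on the chosen representation.

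Linearity then follows by the same refinement device: given $s,t \in {\bf S}(\Omega,E)$ and scalars $\alpha,\beta$, choose finite measurable partitions $(A_i)$ and $(B_j)$ adapted to $s$ and to $t$ respectively, and pass to the common refinement $(A_i \cap B_j)$, which is simultaneously adapted to $s$, to $t$ and to $\alpha s + \beta t$, with respective values $s_i$, $t_j$ and $\alpha s_i + \beta t_j$ on $A_i \cap B_j$. One then computes
\[ \int(\alpha s + \beta t) = \sum_{i,j} \mu(A_i \cap B_j)(\alpha s_i + \beta t_j) = \alpha \sum_{i,j} \mu(A_i \cap B_j) s_i + \beta \sum_{i,j} \mu(A_i \cap B_j) t_j = \alpha \int s + \beta \int t, \]
where the last equality re-invokes well-definedness for $s$ and for $t$. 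Note that finiteness of $\mu$ is what guarantees each coefficient $\mu(A_i \cap B_j)$ is a genuine scalar, so that all the sums above are honest finite linear combinations in $E$.

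For the second assertion, if $s$ is constant with value $s_i$ on $A_i$, then $\|s\|$ is constant with value $\|s_i\| \in [0,\infty) \subset \mathbb{C}$ on the same partition, so $\|s\| \in {\bf S}(\Omega,\mathbb{C})$ and $\int \|s\| = \sum_i \mu(A_i) \|s_i\|$; the triangle inequality in $E$ together with $\mu(A_i) \geq 0$ yields $\|\int s\| = \|\sum_i \mu(A_i) s_i\| \leq \sum_i \mu(A_i) \|s_i\| = \int \|s\|$. Finally, when $E = \mathbb{C}$ and $s \geq 0$, any representation has $s_i \geq 0$ on every nonempty $A_i$ (and $\mu(A_i) = 0$ otherwise), whence $\int s = \sum_i \mu(A_i) s_i \geq 0$; monotonicity is then immediate, since $s \geq t$ forces $s - t \geq 0$ and hence, by linearity, $\int s - \int t = \int(s - t) \geq 0$. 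The only genuinely delicate point is the well-definedness step, together with the attendant bookkeeping over common refinements and possibly empty atoms of a partition; everything downstream is routine manipulation of finite sums.
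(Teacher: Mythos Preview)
Your proof is correct and is precisely the standard argument the paper has in mind; the paper's own proof consists of the single sentence ``This is standard.'' Your common-refinement bookkeeping for well-definedness and linearity, the triangle-inequality estimate, and the positivity/monotonicity deductions are exactly what that phrase is pointing to.
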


\begin{proof}
This is standard.
\end{proof}

We call a set ${\bf E} \subset {\bf S}(\Omega,E)$ {\em elementary} iff for each $\epsilon > 0$ there exists $\delta > 0$ such that $\int \|s\| 1_A < \epsilon$ whenever $s \in {\bf E}$ and $A \in \mathcal{A}$ with $\mu(A) < \delta$. The collection of elementary sets in ${\bf S}(\Omega,E)$ is denoted as $\mathcal{E}(\Omega,E)$.

\begin{pro}\label{pro:PropertiesElementarySets}
$\mathcal{E}(\Omega,E)$ contains all sets ${\bf E} \subset {\bf S}(\Omega,E)$ which are totally bounded under the weak uniformity for the mapping $\int \circ \|\cdot\|$ of ${\bf S}(\Omega,E)$ into $\mathbb{C}$. Furthermore, $\mathcal{E}(\Omega,E)$ is closed under the formation of finite linear combinations.
\end{pro}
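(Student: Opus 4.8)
The plan is to prove the two assertions of Proposition~\ref{pro:PropertiesElementarySets} separately, dealing first with the totally bounded sets and then with closure under finite linear combinations.

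\textbf{Totally bounded sets are elementary.} Let $\mathbf{E} \subset \mathbf{S}(\Omega,E)$ be totally bounded for the weak uniformity induced by the map $\Phi = \int \circ \|\cdot\| : \mathbf{S}(\Omega,E) \to \mathbb{C}$, and fix $\epsilon > 0$. By total boundedness there are $s^1,\ldots,s^m \in \mathbf{E}$ such that every $s \in \mathbf{E}$ satisfies $|\Phi(s) - \Phi(s^j)| < \epsilon/2$ for some $j$. Here is the key observation: for a simple map $s$ and a measurable set $A$, one has $\int\|s\|1_A = \Phi(s) - \int\|s\|1_{\Omega \setminus A}$, and more usefully, if $s^j$ is fixed then $\int\|s^j\|1_A \to 0$ as $\mu(A)\to 0$ because $s^j$ takes only finitely many values, so $\int\|s^j\|1_A \le (\max_i\|s^j_i\|)\mu(A)$. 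Choose $\delta > 0$ so small that $(\max_i\|s^j_i\|)\mu(A) < \epsilon/2$ for every $j = 1,\ldots,m$ whenever $\mu(A) < \delta$. Then for arbitrary $s \in \mathbf{E}$, pick the matching $j$; I would estimate $\int\|s\|1_A$ by comparing $\|s\|1_A$ with $\|s^j\|1_A$. The cleanest route: note $\int\|s\|1_A \le |\int\|s\|1_A - \int\|s^j\|1_A| + \int\|s^j\|1_A$, and for the first term observe $|\int\|s\|1_A - \int\|s^j\|1_A| \le \int|\,\|s\|-\|s^j\|\,|1_A \le \int|\,\|s\|-\|s^j\|\,| = \Phi(s-s^j)$ only if the weak uniformity controls this — which it does not directly. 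So instead I would use that $\int\|s\|1_A \le \Phi(s)$ and, writing things in terms of the complement, deduce $\int\|s\|1_A = \Phi(s) - \int\|s\|1_{\Omega\setminus A}$; this still needs control of $s$ off $A$. The honest fix is that the weak uniformity for $\Phi$ alone does not suffice unless one reads "totally bounded under the weak uniformity for $\int\circ\|\cdot\|$" as total boundedness of the set $\{\|s\|1_A : s\in\mathbf{E},\ A\in\mathcal{A}\}$ or equivalently uses the family of seminorms $s \mapsto \int\|s\|1_A$; I will assume the intended reading gives, for each $\epsilon$, finitely many $s^j$ with $\sup_A|\int\|s\|1_A - \int\|s^j\|1_A| < \epsilon/2$, and then the argument closes: $\int\|s\|1_A < \epsilon/2 + \epsilon/2 = \epsilon$ for $\mu(A) < \delta$.

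\textbf{Closure under finite linear combinations.} Let $\mathbf{E}_1, \mathbf{E}_2 \in \mathcal{E}(\Omega,E)$ and $\alpha,\beta \in \mathbb{C}$; I must show $\alpha\mathbf{E}_1 + \beta\mathbf{E}_2 \in \mathcal{E}(\Omega,E)$. Given $\epsilon > 0$, I may assume $\alpha,\beta \neq 0$ (else the argument only simplifies). Apply the definition of elementary to $\mathbf{E}_1$ with tolerance $\epsilon/(2|\alpha|)$ to get $\delta_1 > 0$, and to $\mathbf{E}_2$ with tolerance $\epsilon/(2|\beta|)$ to get $\delta_2 > 0$; set $\delta = \min(\delta_1,\delta_2)$. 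For $s_1 \in \mathbf{E}_1$, $s_2 \in \mathbf{E}_2$, and $A$ with $\mu(A) < \delta$, the triangle inequality for $\|\cdot\|$ gives $\|\alpha s_1 + \beta s_2\| \le |\alpha|\,\|s_1\| + |\beta|\,\|s_2\|$ pointwise, hence by linearity and monotonicity of $\int$ on $\mathbf{S}(\Omega,\mathbb{C})$ (Proposition~\ref{pro:PropertiesIntegralOfSimpleMaps}), $\int\|\alpha s_1 + \beta s_2\|1_A \le |\alpha|\int\|s_1\|1_A + |\beta|\int\|s_2\|1_A < |\alpha|\cdot\frac{\epsilon}{2|\alpha|} + |\beta|\cdot\frac{\epsilon}{2|\beta|} = \epsilon$. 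This proves $\alpha\mathbf{E}_1 + \beta\mathbf{E}_2 \in \mathcal{E}(\Omega,E)$, and the proof is complete.

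\textbf{Main obstacle.} The second part is entirely routine. The real content — and the step I expect to require the most care — is the first part: pinning down the precise meaning of "totally bounded under the weak uniformity for $\int\circ\|\cdot\|$" so that it genuinely implies the uniform absolute continuity condition defining $\mathcal{E}(\Omega,E)$. Once the finite approximating family $s^1,\ldots,s^m$ is in hand, the finitely-many-values property of each $s^j$ makes $\int\|s^j\|1_A$ small for small $\mu(A)$ by a one-line bound, and the approximation transfers this uniformly to all of $\mathbf{E}$.
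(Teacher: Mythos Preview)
Your proof of the second assertion is correct and is exactly what the paper does.

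For the first assertion, your difficulty is entirely one of interpretation, and once that is resolved your argument is essentially the paper's. Throughout this paper, ``the weak uniformity for the mapping $\int\circ\|\cdot\|$'' does \emph{not} mean the initial uniformity for the single scalar-valued map $s\mapsto\int\|s\|$ (which would only control $|\int\|s\|-\int\|t\||$). It means the translation-invariant uniformity on the vector space generated by the pseudonorm $s\mapsto\int\|s\|$, i.e.\ the one whose basic entourages are $\{(s,t):\int\|s-t\|<\epsilon\}$. This is confirmed by how the phrase is used in Propositions~\ref{pro:InitialUnif} and~\ref{pro:InitialUnifL} and in Corollaries~\ref{cor:SD}--\ref{cor:Vit}, where convergence in this uniformity is always expressed as $\int\|f-f_n\|\to 0$.

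With that reading, total boundedness of $\mathbf{E}$ gives finitely many $s^1,\ldots,s^m\in\mathbf{S}(\Omega,E)$ such that $\min_j\int\|s-s^j\|\le\epsilon$ for every $s\in\mathbf{E}$. Now the very chain you wrote down and then abandoned is exactly what is needed: for any $A\in\mathcal{A}$,
\[
\Bigl|\int\|s\|1_A-\int\|s^j\|1_A\Bigr|\le\int\bigl|\,\|s\|-\|s^j\|\,\bigr|1_A\le\int\|s-s^j\|,
\]
the last step by the reverse triangle inequality (note this is $\le$, not $=$ as you wrote). Hence $\int\|s\|1_A\le\max_j\int\|s^j\|1_A+\epsilon$. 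Since the $s^j$ are simple and finitely many, there is $C>0$ with $\sup_x\|s^j(x)\|\le C$ for all $j$, so $\max_j\int\|s^j\|1_A\le C\mu(A)$, and choosing $\delta=\epsilon/C$ finishes the proof. This is precisely the paper's argument; you had it and talked yourself out of it. The stronger hypothesis you proposed assuming at the end (control of $\sup_A|\int\|s\|1_A-\int\|s^j\|1_A|$) is not needed.
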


\begin{proof}
Let ${\bf E} \subset {\bf S}(\Omega,E)$ be totally bounded under the weak uniformity for the mapping $\int \circ \|\cdot\|$ of ${\bf S}(\Omega,E)$ into $\mathbb{C}$. In order to show that ${\bf E} \in \mathcal{E}(\Omega,E)$, fix $\epsilon > 0$. Then there exists a finite set ${\bf E}_0 \subset {\bf S}(\Omega,E)$ such that for all $s \in {\bf E}$
\begin{eqnarray}
\min_{t \in {\bf E}_0}\int \|s - t\| \leq \epsilon.\label{eq1}
\end{eqnarray}
Now, by (\ref{eq1}), for $s \in {\bf E}$ and $A \in \mathcal{A}$,
\begin{displaymath}
\min_{t \in {\bf E}_0} \left|\int \|s\|1_A - \int \|t\| 1_A\right| \leq \min_{t \in {\bf E}_0} \int \left|\|s\| - \|t\|\right|1_A \leq \min_{t \in {\bf E}_0} \int \|s - t\| \leq \epsilon
\end{displaymath}
whence
\begin{eqnarray}
\int \|s\|1_A \leq \max_{t \in {\bf E}_0}\int \|t\|1_A + \epsilon.\label{eq2}
\end{eqnarray}
Choose a constant $C > 0$ such that
\begin{eqnarray*}
\mymax_{t \in {\bf F}} \sup_{x \in \Omega} \left|t(x)\right| \leq C.
\end{eqnarray*}
Then, by (\ref{eq2}), for $s \in {\bf E}$ and $A \in \mathcal{A}$,
\begin{displaymath}
\int \|s\|1_A \leq \int \|s\|1_A \leq \max_{t \in {\bf F}}\int \|t\|1_A + \epsilon \leq C \mu(A) + \epsilon
\end{displaymath}
entailing that ${\bf E} \in \mathcal{E}(\Omega,E)$. We now show that $\mathcal{E}(\Omega,E)$ is closed under the formation of finite linear combinations. Fix ${\bf E}_1,{\bf E}_2 \in \mathcal{E}(\Omega,E)$ and $\alpha,\beta \in \mathbb{C}$. Then, for $s_1 \in {\bf E}_1$, $s_2 \in {\bf E}_2$ and $A \in \mathcal{A}$,
\begin{displaymath}
\int \|\alpha s_1 + \beta s_2\|1_A \leq \int \left(\left|\alpha\right|\|s_1\| + \left|\beta\right|\|s_2\|\right)1_A = \left|\alpha\right| \int \|s_1\|1_A + \left|\beta\right|\int \|s_2\|1_A
\end{displaymath}
immediately yielding that $\alpha {\bf E}_1 + \beta {\bf E}_2 \in \mathcal{E}\left(\Omega,E\right)$.
\end{proof}

\begin{pro}\label{pro:InitialUnif}
The uniformity of convergence in measure on ${\bf S}(\Omega,E)$ is weaker than the weak uniformity for the mapping $\int \circ \|\cdot\|$ of ${\bf S}(\Omega,E)$ into $\mathbb{C}$, and these uniformities coincide on elementary sets. In particular, the mapping $\int$ of ${\bf S}(\Omega,E)$ into $E$ is uniformly continuous on elementary sets.
\end{pro}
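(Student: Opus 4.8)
The plan is to compare the basic entourages of the two uniformities directly, using Markov's inequality for one inclusion and the defining property of an elementary set for the other. Throughout, write $\rho(s,t) = \int \|s - t\|$ for $s,t \in {\bf S}(\Omega,E)$; the weak uniformity for $\int \circ \|\cdot\|$ then has basic entourages $U_\epsilon = \{(s,t) : \rho(s,t) < \epsilon\}$ (this is exactly the pseudometric used in the proof of Proposition \ref{pro:PropertiesElementarySets}), while the uniformity of convergence in measure has basic entourages $N_\epsilon = \{(s,t) : s - t \in {\bf V}_\epsilon\} = \{(s,t) : \mu(\{\|s-t\| \geq \epsilon\}) < \epsilon\}$.

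First I would prove the comparison claim. For $s,t \in {\bf S}(\Omega,E)$ the function $g = \|s-t\|$ lies in ${\bf S}(\Omega,\mathbb{C})$ and is nonnegative, so $\epsilon\, 1_{\{g \geq \epsilon\}} \leq g$ pointwise and monotonicity of $\int$ on ${\bf S}(\Omega,\mathbb{C})$ (Proposition \ref{pro:PropertiesIntegralOfSimpleMaps}) gives Markov's inequality $\epsilon\, \mu(\{\|s-t\| \geq \epsilon\}) \leq \rho(s,t)$. Hence $U_{\epsilon^2} \subseteq N_\epsilon$ for every $\epsilon > 0$, which says precisely that the uniformity of convergence in measure is weaker than the weak uniformity for $\int \circ \|\cdot\|$.

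For the coincidence on an elementary set ${\bf E}$ it remains, given $\epsilon > 0$, to produce $\eta > 0$ with $N_\eta \cap ({\bf E} \times {\bf E}) \subseteq U_\epsilon$. Since ${\bf E} \in \mathcal{E}(\Omega,E)$, I would choose $\delta > 0$ with $\int \|s\| 1_A < \epsilon/4$ whenever $s \in {\bf E}$ and $\mu(A) < \delta$, and set $\eta = \min\{\delta,\, \epsilon/(2\mu(\Omega)+2)\}$. Then for $s,t \in {\bf E}$ with $(s,t) \in N_\eta$, putting $A = \{\|s-t\| \geq \eta\}$ so that $\mu(A) < \eta \leq \delta$, split $\rho(s,t) = \int \|s-t\| 1_A + \int \|s-t\| 1_{A^c}$: the first summand is at most $\int \|s\| 1_A + \int \|t\| 1_A < \epsilon/2$ by the triangle inequality $\|s-t\| \leq \|s\| + \|t\|$ and the choice of $\delta$, while the second is at most $\eta\, \mu(\Omega) < \epsilon/2$. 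Thus $\rho(s,t) < \epsilon$, giving the desired inclusion; combined with the previous step this shows the two trace uniformities on ${\bf E}$ coincide. (Alternatively, one may invoke the closure of $\mathcal{E}(\Omega,E)$ under finite linear combinations from Proposition \ref{pro:PropertiesElementarySets} to conclude that ${\bf E} - {\bf E}$ is elementary, apply elementariness to $s - t$ directly, and dispense with the triangle inequality.)

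Finally, the last assertion is then immediate: by Proposition \ref{pro:PropertiesIntegralOfSimpleMaps}, $\|\int s - \int t\| = \|\int(s-t)\| \leq \int \|s-t\| = \rho(s,t)$, so $\int$ maps $U_\epsilon$ into the entourage $\{(x,y) : \|x-y\| < \epsilon\}$ of $E$ and is uniformly continuous for the weak uniformity for $\int \circ \|\cdot\|$; since that uniformity agrees with convergence in measure on each elementary set, $\int$ is uniformly continuous on elementary sets for the uniformity of convergence in measure. The only step demanding real care is the coincidence on elementary sets — specifically transferring the elementariness estimate from ${\bf E}$ to differences $s - t$, and exploiting the finiteness of $\mu$ so that $\eta\, \mu(\Omega)$ can be made small; the rest is bookkeeping.
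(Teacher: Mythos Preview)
Your argument is correct and follows the same route as the paper: Markov's inequality for the comparison $U_{\epsilon^2}\subseteq N_\epsilon$, the splitting $\int\|s-t\| = \int\|s-t\|1_A + \int\|s-t\|1_{A^c}$ on $A=\{\|s-t\|\geq\eta\}$ for the reverse inclusion on an elementary set, and the norm bound $\|\int s-\int t\|\leq\int\|s-t\|$ for uniform continuity. The only difference is that you make explicit (via $\|s-t\|\leq\|s\|+\|t\|$, or alternatively via ${\bf E}-{\bf E}\in\mathcal{E}(\Omega,E)$) how the term $\int\|s-t\|1_A$ is controlled by elementariness, a step the paper's compressed proof leaves to the reader.
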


\begin{proof}
For $s , t \in {\bf S}(\Omega,E)$ and $\epsilon > 0$, the inequalities
\begin{align}
& \mu(\left\{\|s -t\| \geq \epsilon\right\}) \leq \epsilon^{-1} \int \|s -t\|,\label{eq3}\\
&\int \|s - t\| \leq \epsilon \mu(\Omega) + \int \|s - t\| 1_{\left\{\|s - t\| \geq \epsilon\right\}},\label{eq4}\\
&\left\|\int s - \int t\right\| \leq \int \|s - t\|\label{eq5}
\end{align}
are easily established. Now (\ref{eq3}) implies that the uniformity of convergence in measure on ${\bf S}(\Omega,E)$ is weaker than the weak uniformity for the mapping $\int \circ \|\cdot\|$ of ${\bf S}(\Omega,E)$ into $\mathbb{C}$, and (\ref{eq4}) entails that these uniformities coincide on elementary sets. Finally, (\ref{eq5}) shows that the mapping $\int$ of ${\bf S}(\Omega,E)$ into $E$ is uniformly continuous on elementary sets.
\end{proof}

A map $f \in {\bf M}(\Omega,E)$ is called {\em (Lebesgue) integrable} iff it belongs to ${\bf L}(\Omega,E) = \cup_{ {\bf E} \in \mathcal{E}(\Omega,E)} \overline{{\bf E}}$. From Theorem \ref{thm:LETVS} and the previous propositions we conclude that ${\bf L}(\Omega,E)$ is a vector space containing ${\bf S}(\Omega,E)$ and that there exists a unique linear extension of $\int$ to ${\bf L}(\Omega,E)$ which is uniformly continuous on the closure of each elementary set. We denote this extension again as $\int$ and we define the {\em integral of $f \in {\bf L}(\Omega,E)$} as $\int f$.

Let $\stackrel{\mu}{\rightarrow}$ stand for convergence in measure.

\begin{pro}\label{pro:PropertiesIntegralOfIntegrableMaps}
The mapping $\int$ of  ${\bf L}(\Omega,E)$ into $E$ is linear and, if $f \in {\bf L}(\Omega,E)$, then $\|f\| \in {\bf L}(\Omega,\mathbb{C})$ and $\|\int f\| \leq \int \|f\|$. In particular, if $E = \mathbb{C}$, then the mapping $\int$ of  ${\bf L}(\Omega,\mathbb{C})$ into $\mathbb{C}$ is positive in the sense that $\int f \geq 0$ if $f \geq 0$ and monotonic in the sense that $\int f \geq \int g$ if $f \geq g$. That is, Proposition \ref{pro:PropertiesIntegralOfSimpleMaps} continues to hold if we replace ${\bf S}(\Omega,E)$ by ${\bf L}(\Omega,E)$ and ${\bf S}(\Omega,\mathbb{C})$ by ${\bf L}(\Omega,\mathbb{C})$.
\end{pro}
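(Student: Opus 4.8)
The plan is to transfer each assertion from $\mathbf{S}(\Omega,E)$ to $\mathbf{L}(\Omega,E)$ by a density argument, using that $\int$ is, by Theorem \ref{thm:LETVS}, uniformly continuous on the closure of every elementary set, together with the fact that the two auxiliary maps $s\mapsto\|s\|$ (pointwise norm) and, in the scalar case, $s\mapsto(\operatorname{Re}s)\vee 0$ are uniformly continuous for convergence in measure and carry elementary sets to elementary sets. Linearity of $\int$ on $\mathbf{L}(\Omega,E)$ requires no separate work, as it is already part of the conclusion of Theorem \ref{thm:LETVS}.

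First I would record two stability facts. \emph{(a)} The pointwise-norm map $N\colon\mathbf{M}(\Omega,E)\to\mathbf{M}(\Omega,\mathbb{C})$, $N(f)(x)=\|f(x)\|$, restricts to $\mathbf{S}(\Omega,E)\to\mathbf{S}(\Omega,\mathbb{C})$ by Proposition \ref{pro:PropertiesIntegralOfSimpleMaps} and is uniformly continuous, because $\bigl|\,\|f(x)\|-\|g(x)\|\,\bigr|\le\|f(x)-g(x)\|$ gives $\{\|N(f)-N(g)\|\geq\epsilon\}\subseteq\{\|f-g\|\geq\epsilon\}$, so $f-g\in\mathbf{V}_\epsilon$ forces $\mu(\{\|N(f)-N(g)\|\geq\epsilon\})<\epsilon$; moreover $N$ maps $\mathcal{E}(\Omega,E)$ into $\mathcal{E}(\Omega,\mathbb{C})$, since $\int\|N(s)\|1_A=\int\|s\|1_A$ for $s\in\mathbf{S}(\Omega,E)$ and $A\in\mathcal{A}$. \emph{(b)} For $E=\mathbb{C}$, the map $P(s)=(\operatorname{Re}s)\vee 0$ sends $\mathbf{S}(\Omega,\mathbb{C})$ into itself, is uniformly continuous on $\mathbf{M}(\Omega,\mathbb{C})$ (it is $1$-Lipschitz pointwise, so again $\{\|P(s)-P(t)\|\geq\epsilon\}\subseteq\{\|s-t\|\geq\epsilon\}$), maps $\mathcal{E}(\Omega,\mathbb{C})$ into $\mathcal{E}(\Omega,\mathbb{C})$ because $0\le P(s)\le|s|$ pointwise yields $\int\|P(s)\|1_A\le\int\|s\|1_A$, and satisfies $P(f)=f$ whenever $f$ is real valued with $f\geq 0$. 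Since a continuous map $\phi$ obeys $\phi(\overline{\mathbf{E}})\subseteq\overline{\phi(\mathbf{E})}$, and $N(\mathbf{E})$ is elementary as soon as $\mathbf{E}$ is, fact \emph{(a)} immediately gives $N(\mathbf{L}(\Omega,E))=N\bigl(\bigcup_{\mathbf{E}\in\mathcal{E}(\Omega,E)}\overline{\mathbf{E}}\bigr)\subseteq\bigcup_{\mathbf{E}}\overline{N(\mathbf{E})}\subseteq\mathbf{L}(\Omega,\mathbb{C})$, that is, $\|f\|\in\mathbf{L}(\Omega,\mathbb{C})$ for every $f\in\mathbf{L}(\Omega,E)$.

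Next I would prove the inequality. Fix $f\in\mathbf{L}(\Omega,E)$ and an elementary set $\mathbf{E}$ with $f\in\overline{\mathbf{E}}$; since $\mathbf{M}(\Omega,E)$ is pseudometrisable, choose $s_n\in\mathbf{E}$ with $s_n\stackrel{\mu}{\rightarrow}f$. By \emph{(a)}, $\|s_n\|\stackrel{\mu}{\rightarrow}\|f\|$ with $\|s_n\|\in N(\mathbf{E})\in\mathcal{E}(\Omega,\mathbb{C})$. As $\int$ is uniformly continuous on $\overline{\mathbf{E}}$ and on $\overline{N(\mathbf{E})}$, we get $\int s_n\to\int f$ in $E$ and $\int\|s_n\|\to\int\|f\|$ in $\mathbb{C}$, hence $\|\int s_n\|\to\|\int f\|$. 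Proposition \ref{pro:PropertiesIntegralOfSimpleMaps} gives $\|\int s_n\|\le\int\|s_n\|$ for every $n$, and letting $n\to\infty$ yields $\|\int f\|\le\int\|f\|$. For the scalar statement, take real valued $f\in\mathbf{L}(\Omega,\mathbb{C})$ with $f\geq 0$, an elementary $\mathbf{E}$ with $f\in\overline{\mathbf{E}}$, and $s_n\in\mathbf{E}$ with $s_n\stackrel{\mu}{\rightarrow}f$; by \emph{(b)}, $P(s_n)\stackrel{\mu}{\rightarrow}P(f)=f$ with $P(s_n)\in P(\mathbf{E})\in\mathcal{E}(\Omega,\mathbb{C})$, so $f\in\overline{P(\mathbf{E})}$ and $\int f=\lim_n\int P(s_n)$ by uniform continuity of $\int$ on $\overline{P(\mathbf{E})}$; since each $P(s_n)\geq 0$ gives $\int P(s_n)\geq 0$ by Proposition \ref{pro:PropertiesIntegralOfSimpleMaps}, we conclude $\int f\geq 0$. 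Monotonicity then follows from positivity and linearity applied to $f-g\geq 0$.

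The one point that really needs care -- the main obstacle -- is precisely this interplay between the nonlinear maps $N$ and $P$ and the \emph{elementary set} scaffolding: one must check that these maps are uniformly continuous for convergence in measure \emph{and} carry elementary sets to elementary sets, for that is exactly what keeps the approximating sequences inside a set on whose closure $\int$ is continuous, and hence licenses the passage to the limit. Everything else is a routine limiting argument in (in)equalities already established for simple maps in Proposition \ref{pro:PropertiesIntegralOfSimpleMaps}.
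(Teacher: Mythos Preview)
Your proof is correct and follows essentially the same approach as the paper: approximate $f$ by simple maps $s_n$ lying in an elementary set, use that $\|\cdot\|$ preserves convergence in measure and carries elementary sets to elementary sets, and pass to the limit in the inequalities of Proposition \ref{pro:PropertiesIntegralOfSimpleMaps} using the continuity of $\int$ on closures of elementary sets. The paper's proof is terser---it records the same limit argument for $\|\int f\|\le\int\|f\|$ and then dismisses positivity and monotonicity as ``easily established''---whereas you spell out the stability of $N$ and add the auxiliary map $P(s)=(\operatorname{Re}s)\vee 0$ to force nonnegative simple approximants in the positivity step; this is a clean way to make that step explicit.
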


\begin{proof}
The linearity of $\int$ was already established. Fix $f \in {\bf L}(\Omega,E)$. We prove that $\|f\| \in {\bf L}(\Omega,\mathbb{C})$ and $\|\int f \| \leq \int \|f\|$. By definition of ${\bf L}(\Omega,E)$, there exists a sequence $\left(s_n\right)_n$ such that $\{s_n \mid n\} \in \mathcal{E}(\Omega,E)$ and $s_n \stackrel{\mu}{\rightarrow} f$. One easily sees that $\{\|s_n\| \mid n\} \in \mathcal{E}(\Omega,\mathbb{C})$ and that $\|s_n\| \stackrel{\mu}{\rightarrow} \|f\|$. Hence $\|f\| \in {\bf L}(\Omega,\mathbb{C})$ and, $\int$ being continuous on the closure of each elementary set, Proposition \ref{pro:PropertiesIntegralOfSimpleMaps} gives 
\begin{align*}
\left\|\int f\right\| = \lim_n \left\|\int s_n\right\| \leq \lim_n \int \|s_n\| = \int \|f\|. 
\end{align*}
The other properties are now easily established.
\end{proof}

We call a set ${\bf F} \subset {\bf L}(\Omega,E)$ {\em uniformly integrable} iff for each $\epsilon > 0$ there exists $\delta > 0$ such that $\int \|f\| 1_A < \epsilon$ whenever $f \in {\bf F}$ and $A \in \mathcal{A}$ with $\mu(A) < \delta$. Notice that the closure of an elementary set is uniformly integrable because $\int$ is continuous on such a set. The collection of uniformly integrable sets in ${\bf L}(\Omega,E)$ is denoted as $\mathcal{I}(\Omega,E)$.

\begin{pro}\label{pro:PropertiesUnifIntSets}
$\mathcal{I}(\Omega,E)$ contains all sets ${\bf F} \subset {\bf S}(\Omega,E)$ which are totally bounded under the weak uniformity for the mapping $\int \circ \|\cdot\|$ of ${\bf L}(\Omega,E)$ into $\mathbb{C}$. Furthermore, $\mathcal{I}(\Omega,E)$ is closed under the formation of finite linear combinations. That is, Proposition \ref{pro:PropertiesElementarySets} continues to hold if we replace $\mathcal{E}(\Omega,E)$ by $\mathcal{I}(\Omega,E)$ and ${\bf S}(\Omega,E)$ by ${\bf L}(\Omega,E)$.
\end{pro}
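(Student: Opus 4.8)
The plan is to transcribe the proof of Proposition~\ref{pro:PropertiesElementarySets} almost word for word, replacing ${\bf S}(\Omega,E)$ by ${\bf L}(\Omega,E)$ and $\mathcal{E}(\Omega,E)$ by $\mathcal{I}(\Omega,E)$, and invoking Proposition~\ref{pro:PropertiesIntegralOfIntegrableMaps} (rather than Proposition~\ref{pro:PropertiesIntegralOfSimpleMaps}) for linearity, monotonicity and the inequality $\|\int\cdot\| \leq \int\|\cdot\|$ on ${\bf L}(\Omega,\mathbb{C})$. First I would record the routine remark that $f1_A \in {\bf L}(\Omega,E)$ and $\|f\|1_A = \|f1_A\| \in {\bf L}(\Omega,\mathbb{C})$ whenever $f \in {\bf L}(\Omega,E)$ and $A \in \mathcal{A}$: approximating $f$ in measure by a sequence $(s_n)_n$ drawn from an elementary set and multiplying through by $1_A$ gives a sequence witnessing this, exactly as in the proof of Proposition~\ref{pro:PropertiesIntegralOfIntegrableMaps}. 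This guarantees that every expression $\int\|f\|1_A$ below is meaningful.

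For the first assertion, let ${\bf F}$ be totally bounded under the weak uniformity for $\int\circ\|\cdot\|$ and fix $\epsilon > 0$. As in (\ref{eq1}) there is a finite ${\bf F}_0 \subset {\bf L}(\Omega,E)$ with $\min_{g\in{\bf F}_0}\int\|f-g\| \leq \epsilon$ for every $f \in {\bf F}$, the quantity $\int\|f-g\|$ being well defined since ${\bf L}(\Omega,E)$ is a vector space and Proposition~\ref{pro:PropertiesIntegralOfIntegrableMaps} applies. Monotonicity of $\int$ on ${\bf L}(\Omega,\mathbb{C})$ then yields, as in (\ref{eq2}),
\begin{displaymath}
\int\|f\|1_A \leq \max_{g\in{\bf F}_0}\int\|g\|1_A + \epsilon \qquad (f \in {\bf F},\ A \in \mathcal{A}).
\end{displaymath}
Here the argument genuinely departs from the proof of Proposition~\ref{pro:PropertiesElementarySets}: the members of ${\bf F}_0$ are no longer simple, so the crude sup-norm bound $\max_{t}\sup_x|t(x)| \leq C$ used there is unavailable. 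Instead I would observe that each singleton $\{g\}$ with $g \in {\bf L}(\Omega,E)$ is uniformly integrable, being contained in the closure $\overline{{\bf E}}$ of some ${\bf E} \in \mathcal{E}(\Omega,E)$, which is uniformly integrable because $\int$ is continuous on it; hence the finite union ${\bf F}_0 = \bigcup_{g\in{\bf F}_0}\{g\}$ is uniformly integrable. Choosing $\delta > 0$ that witnesses this for $\epsilon$ gives $\int\|f\|1_A < 2\epsilon$ whenever $f \in {\bf F}$ and $\mu(A) < \delta$, so ${\bf F} \in \mathcal{I}(\Omega,E)$.

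For the second assertion I would simply repeat the corresponding step of Proposition~\ref{pro:PropertiesElementarySets}. Given ${\bf F}_1,{\bf F}_2 \in \mathcal{I}(\Omega,E)$ and $\alpha,\beta \in \mathbb{C}$, the pointwise triangle inequality together with linearity and monotonicity of $\int$ on ${\bf L}(\Omega,\mathbb{C})$ (Proposition~\ref{pro:PropertiesIntegralOfIntegrableMaps}) gives
\begin{displaymath}
\int\|\alpha f_1 + \beta f_2\|1_A \leq |\alpha|\int\|f_1\|1_A + |\beta|\int\|f_2\|1_A
\end{displaymath}
for all $f_1 \in {\bf F}_1$, $f_2 \in {\bf F}_2$ and $A \in \mathcal{A}$; picking $\delta$ small enough that uniform integrability of ${\bf F}_1$ and of ${\bf F}_2$ (with tolerances $\epsilon/(2(|\alpha|+1))$ and $\epsilon/(2(|\beta|+1))$) makes each term on the right smaller than $\epsilon/2$ shows $\alpha{\bf F}_1 + \beta{\bf F}_2 \in \mathcal{I}(\Omega,E)$. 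Thus the only real obstacle is the one flagged above --- the sup-norm estimate must be replaced by the uniform integrability of finite subsets of ${\bf L}(\Omega,E)$ --- and the rest is a transcription of the earlier proof.
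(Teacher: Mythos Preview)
Your proof is correct and follows essentially the same route as the paper's. The only cosmetic difference is in the key step after obtaining the analogue of (\ref{eq2}): you phrase it as ``each singleton $\{g\}\subset{\bf L}(\Omega,E)$ is uniformly integrable, hence so is the finite set ${\bf F}_0$,'' whereas the paper goes back one level and picks, for each $g\in{\bf F}_0$, an elementary set ${\bf E}_g$ with $g\in\overline{{\bf E}_g}$, notes that the finite union $\bigcup_g{\bf E}_g$ is elementary, and then invokes continuity of $\int$ on its closure --- which is exactly the fact (already recorded just before the Proposition) that closures of elementary sets are uniformly integrable, so the two formulations coincide.
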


\begin{proof}
Let ${\bf F} \subset {\bf L}(\Omega,E)$ be totally bounded under the weak uniformity for the mapping $\int \circ \|\cdot\|$ of ${\bf L}(\Omega,E)$ into $\mathbb{C}$. In order to show that ${\bf F} \in \mathcal{I}(\Omega,E)$, fix $\epsilon > 0$. Copying the first part of the proof of Proposition \ref{pro:PropertiesElementarySets}, we find a finite set ${\bf F}_0 \subset {\bf L}(\Omega,E)$ such that for each $f \in {\bf F}$ and each $A \in \mathcal{A}$ 
\begin{eqnarray}
\int \|f\|1_A \leq \max_{g \in {\bf F}_0} \int \|g\|1_A + \epsilon.\label{eq:UnifIntF}
\end{eqnarray}
By definition of ${\bf L}(\Omega,E)$, there exists, for each $g \in {\bf F}_0$, a set ${\bf E}_g \in \mathcal{E}(\Omega,E)$ such that $g \in 
\overline{{\bf E}_g}$. Now (\ref{eq:UnifIntF}), in combination with the fact that $\cup_{g \in {\bf F}_0} {\bf E}_g \in \mathcal{E}(\Omega,E)$ and the continuity of $\int$ on the closure of each elementary set, easily implies that ${\bf F} \in \mathcal{I}(\Omega,E)$. As in the proof of Proposition \ref{pro:PropertiesElementarySets}, the fact that $\mathcal{I}(\Omega,E)$ is closed under the formation of finite linear combinations is a straightforward application of the linearity and the monotonicity of $\int$.
\end{proof}

\begin{pro}\label{pro:InitialUnifL}
The uniformity of convergence in measure on ${\bf L}(\Omega,E)$ is weaker than the weak uniformity for the mapping $\int \circ \|\cdot\|$ of ${\bf L}(\Omega,E)$ into $\mathbb{C}$, and these uniformities coincide on uniformly integrable sets. In particular, the mapping $\int$ of ${\bf L}(\Omega,E)$ into $E$ is uniformly continuous on uniformly integrable sets. That is, Proposition \ref{pro:InitialUnif} continues to hold if we replace ${\bf S}(\Omega,E)$ by ${\bf L}(\Omega,E)$ and  `elementary set' by `uniformly integrable set'. 
\end{pro}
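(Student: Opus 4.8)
The plan is to transcribe the proof of Proposition~\ref{pro:InitialUnif} essentially word for word, the three estimates (\ref{eq3})--(\ref{eq5}) now being needed for arbitrary maps $f,g\in{\bf L}(\Omega,E)$ rather than for simple ones. What makes this transcription legitimate is Proposition~\ref{pro:PropertiesIntegralOfIntegrableMaps} (linearity of $\int$ on ${\bf L}(\Omega,E)$, the implication $f\in{\bf L}(\Omega,E)\Rightarrow\|f\|\in{\bf L}(\Omega,\mathbb{C})$, the domination $\|\int f\|\leq\int\|f\|$, and the positivity and monotonicity of $\int$ on ${\bf L}(\Omega,\mathbb{C})$), together with Proposition~\ref{pro:PropertiesUnifIntSets} (so that ${\bf F}-{\bf F}\in\mathcal{I}(\Omega,E)$ whenever ${\bf F}\in\mathcal{I}(\Omega,E)$) and one auxiliary remark: if $\phi\in{\bf L}(\Omega,\mathbb{C})$ is nonnegative and $A\in\mathcal{A}$, then $\phi 1_A\in{\bf L}(\Omega,\mathbb{C})$, $0\leq\int\phi 1_A\leq\int\phi$ and $\int\phi=\int\phi 1_A+\int\phi 1_{\Omega\setminus A}$.

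I would dispose of the auxiliary remark first. As in the proof of Proposition~\ref{pro:PropertiesIntegralOfIntegrableMaps}, choose a sequence of simple maps $(s_n)_n$ with $\{s_n\mid n\}\in\mathcal{E}(\Omega,\mathbb{C})$ and $s_n\stackrel{\mu}{\rightarrow}\phi$; then $s_n 1_A\stackrel{\mu}{\rightarrow}\phi 1_A$, and $\{s_n 1_A\mid n\}\in\mathcal{E}(\Omega,\mathbb{C})$ because $\int\|s_n\| 1_A 1_B=\int\|s_n\| 1_{A\cap B}$ and $\mu(A\cap B)\leq\mu(B)$. Hence $\phi 1_A\in{\bf L}(\Omega,\mathbb{C})$, and the asserted (in)equalities follow from the positivity, monotonicity and linearity of $\int$ on ${\bf L}(\Omega,\mathbb{C})$ (Proposition~\ref{pro:PropertiesIntegralOfIntegrableMaps}) together with $\int 1_A=\mu(A)$. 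In particular, for $f,g\in{\bf L}(\Omega,E)$ we have $f-g\in{\bf L}(\Omega,E)$, whence $\|f-g\|\in{\bf L}(\Omega,\mathbb{C})$ and $\|f-g\|1_A\in{\bf L}(\Omega,\mathbb{C})$ for every $A\in\mathcal{A}$.

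With this at hand, fix $f,g\in{\bf L}(\Omega,E)$ and $\epsilon>0$. The pointwise inequalities $\epsilon\, 1_{\{\|f-g\|\geq\epsilon\}}\leq\|f-g\|$ and $\|f-g\|\leq\epsilon\, 1_\Omega+\|f-g\| 1_{\{\|f-g\|\geq\epsilon\}}$, combined with the monotonicity and linearity of $\int$ on ${\bf L}(\Omega,\mathbb{C})$, yield
\begin{align*}
&\mu(\{\|f-g\|\geq\epsilon\})\leq\epsilon^{-1}\int\|f-g\|,\\
&\int\|f-g\|\leq\epsilon\,\mu(\Omega)+\int\|f-g\| 1_{\{\|f-g\|\geq\epsilon\}},
\end{align*}
while $\|\int f-\int g\|=\|\int(f-g)\|\leq\int\|f-g\|$ by Proposition~\ref{pro:PropertiesIntegralOfIntegrableMaps}. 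These are precisely the analogues of (\ref{eq3}), (\ref{eq4}) and (\ref{eq5}), so the argument of Proposition~\ref{pro:InitialUnif} carries over verbatim: the first shows that the uniformity of convergence in measure on ${\bf L}(\Omega,E)$ is weaker than the weak uniformity for $\int\circ\|\cdot\|$; the second, using that ${\bf F}-{\bf F}\in\mathcal{I}(\Omega,E)$ whenever ${\bf F}\in\mathcal{I}(\Omega,E)$ (Proposition~\ref{pro:PropertiesUnifIntSets}), shows that these uniformities coincide on each uniformly integrable set; and the third then shows that $\int$ is uniformly continuous on each uniformly integrable set.

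The only new ingredient, beyond the propositions already proved, is the auxiliary remark, and the sole point to verify there is that multiplying an integrable scalar map by an indicator preserves integrability --- which is immediate from $\int\|s\| 1_A 1_B=\int\|s\| 1_{A\cap B}$ and $\mu(A\cap B)\leq\mu(B)$. I therefore do not anticipate any genuine obstacle.
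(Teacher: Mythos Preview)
Your proposal is correct and follows exactly the paper's own approach: the paper's proof is the single sentence ``This is identical to the proof of Proposition~\ref{pro:InitialUnif},'' and you have simply spelled out the routine verifications (via Proposition~\ref{pro:PropertiesIntegralOfIntegrableMaps} and the closure of $\mathcal{I}(\Omega,E)$ under differences) that make the transcription go through. The auxiliary remark on $\phi 1_A$ is the only extra bookkeeping, and your justification of it is sound.
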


\begin{proof}
This is identical to the proof of Proposition \ref{pro:InitialUnif}.
\end{proof}

\begin{gev}\label{cor:SD}
The space ${\bf L}(\Omega,E)$, equipped with the weak uniformity for the mapping $\int \circ \|\cdot\|$ of ${\bf L}(\Omega,E)$ into $\mathbb{C}$, contains ${\bf S}(\Omega,E)$ as a dense subspace.
\end{gev}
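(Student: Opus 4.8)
The plan is to show that every $f \in {\bf L}(\Omega,E)$ is the limit, in the weak uniformity for $\int \circ \|\cdot\|$, of a sequence drawn from ${\bf S}(\Omega,E)$. Since we already know ${\bf S}(\Omega,E)$ to be a vector subspace of ${\bf L}(\Omega,E)$, this is precisely the content of the corollary.

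First I would unwind the definition of integrability. Given $f \in {\bf L}(\Omega,E) = \cup_{{\bf E} \in \mathcal{E}(\Omega,E)} \overline{{\bf E}}$, fix an elementary set ${\bf E} \in \mathcal{E}(\Omega,E)$ with $f \in \overline{{\bf E}}$. The uniformity of convergence in measure on ${\bf M}(\Omega,E)$ is pseudometrisable, so there is a sequence $(s_n)_n$ in ${\bf E}$ with $s_n \stackrel{\mu}{\rightarrow} f$. All the $s_n$ as well as $f$ then lie in $\overline{{\bf E}}$, which is uniformly integrable (as recorded just after the definition of $\mathcal{I}(\Omega,E)$, using the continuity of $\int$ on closures of elementary sets).

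Now I would invoke Proposition \ref{pro:InitialUnifL}: on the uniformly integrable set $\overline{{\bf E}}$ the uniformity of convergence in measure and the weak uniformity for $\int \circ \|\cdot\|$ coincide. Since convergence of a sequence contained in a subset is unaffected by passing to the subspace uniformity, $s_n \stackrel{\mu}{\rightarrow} f$ upgrades to $s_n \to f$ in the weak uniformity for $\int \circ \|\cdot\|$ on ${\bf L}(\Omega,E)$, i.e. $\int \|s_n\| \to \int \|f\|$. As each $s_n$ is simple, this furnishes the required approximation and completes the proof.

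I do not expect a genuine obstacle: the statement is essentially a repackaging of the construction of ${\bf L}(\Omega,E)$ together with Proposition \ref{pro:InitialUnifL}, the only point demanding a moment's care being the harmless interchange between the subspace uniformity on $\overline{{\bf E}}$ and the ambient uniformity on ${\bf L}(\Omega,E)$. If one prefers to bypass Proposition \ref{pro:InitialUnifL}, an equally short route is to note that $\|s_n\| \stackrel{\mu}{\rightarrow} \|f\|$ with $\{\|s_n\| \mid n\} \in \mathcal{E}(\Omega,\mathbb{C})$, so that $\|f\|$ lies in the closure of an elementary set and the uniform continuity of $\int$ on that closure gives $\int \|s_n\| \to \int \|f\|$ directly.
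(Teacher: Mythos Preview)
Your main argument is correct and is essentially the paper's own proof: choose a sequence of simple maps from an elementary set converging in measure to $f$, note that the sequence together with $f$ lies in a uniformly integrable set (you use $\overline{{\bf E}}$; the paper uses $\{s_n \mid n\} \cup \{f\}$), and apply Proposition~\ref{pro:InitialUnifL} to upgrade the convergence.

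One caution concerning terminology. Your gloss ``i.e.\ $\int \|s_n\| \to \int \|f\|$'' interprets the phrase \emph{weak uniformity for} $\int \circ \|\cdot\|$ as the literal initial uniformity for the scalar map $f \mapsto \int \|f\|$. In this paper, however, the phrase is used throughout to mean the uniformity with basic entourages $\{(f,g) : \int \|f-g\| < \epsilon\}$, i.e.\ the $L^1$ uniformity; this is visible in the proofs of Propositions~\ref{pro:PropertiesElementarySets} and~\ref{pro:InitialUnif}, where total boundedness is unpacked as $\min_t \int \|s-t\| \leq \epsilon$ and the comparison with convergence in measure runs through $\int \|s-t\|$. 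Under that reading the conclusion you need is $\int \|f - s_n\| \to 0$, which your invocation of Proposition~\ref{pro:InitialUnifL} does deliver; so your main line is unaffected. Your proposed alternative route, though, only yields $\int \|s_n\| \to \int \|f\|$ and therefore does \emph{not} establish density in the intended sense.
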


\begin{proof}
Fix $f \in {\bf L}(\Omega,E)$. Then, by definition of ${\bf L}(\Omega,E)$, there exists a sequence $(s_n)_n$ such that $\{s_n \mid n\} \in \mathcal{E}(\Omega,E)$ and $s_n \stackrel{\mu}{\rightarrow} f$. We know that $\{s_n \mid n\} \cup \{f\} \in \mathcal{I}(\Omega,E)$ and thus Proposition \ref{pro:InitialUnifL} implies that, on the set $\{s_n \mid n\} \cup \{f\}$, the uniformity of convergence in measure coincides with the weak uniformity for the mapping $\int \circ \|\cdot\|$ of ${\bf L}(\Omega,E)$ into $E$. Hence $\int \|f - s_n\| \rightarrow 0$, finishing the proof of the corollary. 
\end{proof}

\begin{gev}\label{cor:Vit} (Vitali) 
Fix $f \in {\bf M}(\Omega,E)$ and $\left(f_n\right)_n$ in ${\bf L}(\Omega,E)$. Then the following are equivalent.
\begin{enumerate}
	\item $f \in {\bf L}(\Omega,E)$ and $\int \|f - f_n\| \rightarrow 0$.
	\item $\left\{f_n \mid n\right\} \in \mathcal{I}(\Omega,E)$ and $f_n \stackrel{\mu}{\rightarrow} f$. 
\end{enumerate}
\end{gev}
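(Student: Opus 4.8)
The plan is to prove the two implications separately. In both directions I would translate the statement $f_n \stackrel{\mu}{\rightarrow} f$ into a statement about the weak uniformity for $\int \circ \|\cdot\|$ via Proposition~\ref{pro:InitialUnifL}, and deal with uniform integrability via Propositions~\ref{pro:PropertiesUnifIntSets} and~\ref{pro:InitialUnifL}. I would freely use three facts, all immediate from the definitions: a subset of a uniformly integrable set is uniformly integrable; a finite union of uniformly integrable sets is uniformly integrable; and a uniformly integrable subset of ${\bf S}(\Omega,E)$ is elementary.

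For $(1)\Rightarrow(2)$, since by Proposition~\ref{pro:InitialUnifL} the uniformity of convergence in measure on ${\bf L}(\Omega,E)$ is weaker than the weak uniformity for $\int \circ \|\cdot\|$, the hypothesis $\int \|f - f_n\| \rightarrow 0$ at once gives $f_n \stackrel{\mu}{\rightarrow} f$. For the uniform integrability of $\{f_n \mid n\}$ I would note that the singleton $\{f\}$ is uniformly integrable, being a subset of the closure of an elementary set; that $\{f - f_n \mid n\} \cup \{0\}$, as a sequence converging to $0$ in the weak uniformity for $\int \circ \|\cdot\|$, is totally bounded there, hence uniformly integrable by Proposition~\ref{pro:PropertiesUnifIntSets}; and that, since $\mathcal{I}(\Omega,E)$ is closed under the formation of finite linear combinations, the inclusion $\{f_n \mid n\} \subset \{f\} + (-1)\cdot\left(\{f - f_n \mid n\} \cup \{0\}\right)$ forces $\{f_n \mid n\} \in \mathcal{I}(\Omega,E)$.

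For $(2)\Rightarrow(1)$, I would use Corollary~\ref{cor:SD} to pick, for each $n$, a simple map $s_n \in {\bf S}(\Omega,E)$ with $\int \|f_n - s_n\| < 1/n$. Then $\{s_n \mid n\} \subset \{f_n \mid n\} + \left(\{s_n - f_n \mid n\} \cup \{0\}\right)$ is uniformly integrable by the reasoning of the previous paragraph, and being contained in ${\bf S}(\Omega,E)$ it is in fact an elementary set. Since $\int \|f_n - s_n\| \rightarrow 0$ gives $f_n - s_n \stackrel{\mu}{\rightarrow} 0$ (Proposition~\ref{pro:InitialUnifL} once more), the hypothesis $f_n \stackrel{\mu}{\rightarrow} f$ yields $s_n \stackrel{\mu}{\rightarrow} f$, so $f \in \overline{\{s_n \mid n\}} \subset {\bf L}(\Omega,E)$. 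Finally $\{f_n \mid n\} \cup \{f\}$ is uniformly integrable---a union of two uniformly integrable sets, $\{f\}$ being a subset of $\overline{\{s_n \mid n\}}$---so by Proposition~\ref{pro:InitialUnifL} the uniformity of convergence in measure and the weak uniformity for $\int \circ \|\cdot\|$ agree on it, and $f_n \stackrel{\mu}{\rightarrow} f$ upgrades to $\int \|f - f_n\| \rightarrow 0$.

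The Chebyshev-type estimates and the triangle inequality for $\int \|\cdot\|$ used implicitly above are exactly---or are proved exactly as---the inequalities~(\ref{eq3})--(\ref{eq5}), and cost nothing. The one place that needs a moment's thought is the claim that $\{s_n \mid n\}$ is elementary in the proof of $(2)\Rightarrow(1)$: this is the only spot where the uniform integrability of $\{f_n \mid n\}$ is genuinely used, and it is precisely what places $f$ inside ${\bf L}(\Omega,E)$. Everything else is a matter of assembling the preceding propositions.
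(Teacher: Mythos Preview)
Your proof is correct and follows the same route as the paper's: Proposition~\ref{pro:PropertiesUnifIntSets} for uniform integrability, Proposition~\ref{pro:InitialUnifL} to pass between the two uniformities, and Corollary~\ref{cor:SD} to approximate by simple maps in $(2)\Rightarrow(1)$. The only cosmetic difference is that in $(1)\Rightarrow(2)$ the paper observes directly that $\{f_n \mid n\}$, being convergent in the weak uniformity for $\int \circ \|\cdot\|$, is totally bounded there and hence uniformly integrable by Proposition~\ref{pro:PropertiesUnifIntSets}, bypassing your decomposition $f_n = f - (f - f_n)$.
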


\begin{proof}
$(1) \Rightarrow (2)$ Suppose that $f \in {\bf L}(\Omega,E)$ and $\int \|f - f_n\| \rightarrow 0$. Then $\{f_n \mid n\}$ is totally bounded under the weak uniformity for the mapping $\int \circ \|\cdot\|$ of ${\bf L}(\Omega,E)$ into $\mathbb{C}$ and thus, by Proposition \ref{pro:PropertiesUnifIntSets}, $\{f_n \mid n\} \in \mathcal{I}(\Omega,E)$. Furthermore, by Proposition \ref{pro:InitialUnifL}, the uniformity of convergence in measure on ${\bf L}(\Omega,E)$ is weaker than the weak uniformity for the mapping $\int \circ \|\cdot\|$ of ${\bf L}(\Omega,E)$ into $\mathbb{C}$. Hence we conclude that $f_n \stackrel{\mu}{\rightarrow} f$.
$(2) \Rightarrow (1)$ Suppose that $\left\{f_n \mid n\right\} \in \mathcal{I}(\Omega,E)$ and $f_n \stackrel{\mu}{\rightarrow} f$. Corollary \ref{cor:SD} allows us to choose, for $n \in \mathbb{N}_0$, $s_n \in {\bf S}(\Omega,E)$ such that $\int \|f_n - s_n\| \leq 1/n$. Now one easily establishes that $\left\{s_n \mid n\right\} \in \mathcal{E}(\Omega,E)$, whence  $f \in \overline{\left\{s_n \mid n\right\}} \subset {\bf L}(\Omega,E)$. Furthermore, Proposition \ref{pro:InitialUnifL} reveals that, on the set $\{f_n \mid n\} \cup \{f\}$, the uniformity of convergence in measure coincides with the weak uniformity for the mapping $\int \circ \|\cdot\|$ of ${\bf L}(\Omega,E)$ into $\mathbb{C}$. As a consequence, $\int \|f - f_n\| \rightarrow 0$.
\end{proof}

\begin{gev}(Riesz-Fischer)
The space ${\bf L}(\Omega,E)$, equipped with the weak uniformity for the mapping $\int \circ \|\cdot\|$ of ${\bf L}(\Omega,E)$ into $\mathbb{C}$, is complete.
\end{gev}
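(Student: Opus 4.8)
The plan is to obtain this as an almost immediate consequence of Vitali's theorem (Corollary \ref{cor:Vit}) together with the completeness of the space of Borel measurable maps under convergence in measure. Since the weak uniformity for $\int \circ \|\cdot\|$ on ${\bf L}(\Omega,E)$ is generated by the pseudometric $d(f,g) = \int \|f-g\|$ (a pseudometric by the linearity and monotonicity of $\int$ established in Proposition \ref{pro:PropertiesIntegralOfIntegrableMaps}), this uniformity is pseudometrisable, so it is enough to check that every $d$-Cauchy sequence $(f_n)_n$ in ${\bf L}(\Omega,E)$ converges.

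First I would record that the range $\{f_n \mid n\}$ of a $d$-Cauchy sequence is totally bounded under the weak uniformity for $\int \circ \|\cdot\|$: given $\epsilon > 0$, pick $N$ with $\int \|f_n - f_m\| < \epsilon$ for all $n,m \geq N$; then the set $\{f_n \mid n \geq N\}$ together with the finitely many singletons $\{f_1\},\ldots,\{f_{N-1}\}$ provides a cover of $\{f_n \mid n\}$ by $\epsilon$-small sets. Hence, by Proposition \ref{pro:PropertiesUnifIntSets}, $\{f_n \mid n\} \in \mathcal{I}(\Omega,E)$. This is exactly the step that was already used in the proof of the implication $(1) \Rightarrow (2)$ of Corollary \ref{cor:Vit}.

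Next, Proposition \ref{pro:InitialUnifL} says that the uniformity of convergence in measure on ${\bf L}(\Omega,E)$ is weaker than the weak uniformity for $\int \circ \|\cdot\|$, so $(f_n)_n$ is in particular a Cauchy sequence for convergence in measure. As the uniform structure of ${\bf M}(\Omega,E)$ is complete, there exists $f \in {\bf M}(\Omega,E)$ with $f_n \stackrel{\mu}{\rightarrow} f$. Now I would apply the implication $(2) \Rightarrow (1)$ of Corollary \ref{cor:Vit}: since $\{f_n \mid n\} \in \mathcal{I}(\Omega,E)$ and $f_n \stackrel{\mu}{\rightarrow} f$, we conclude that $f \in {\bf L}(\Omega,E)$ and $\int \|f - f_n\| \rightarrow 0$, that is, $d(f_n,f) \to 0$. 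Thus $(f_n)_n$ converges in ${\bf L}(\Omega,E)$, which proves completeness.

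I do not expect a genuine obstacle here: all the substance has been placed in Vitali's theorem, and ultimately in the coincidence, on uniformly integrable sets, of the uniformity of convergence in measure with the $L^1$-type uniformity (Proposition \ref{pro:InitialUnifL}). The only mild points of care are the reduction to sequences via pseudometrisability and the standard fact that a Cauchy sequence has totally bounded range — the latter being precisely what triggers Proposition \ref{pro:PropertiesUnifIntSets}. Everything else is bookkeeping.
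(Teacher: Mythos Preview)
Your proof is correct and follows essentially the same route as the paper's own argument: pass to a Cauchy sequence, use Proposition \ref{pro:InitialUnifL} to get Cauchyness in measure and hence a limit $f\in{\bf M}(\Omega,E)$ by completeness, invoke total boundedness plus Proposition \ref{pro:PropertiesUnifIntSets} to obtain uniform integrability, and conclude via Corollary \ref{cor:Vit}. The only addition is that you make the reduction to sequences via pseudometrisability explicit, which the paper leaves tacit.
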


\begin{proof}
Let $\left(f_n\right)_n$ be Cauchy in ${\bf L}(\Omega,E)$. Proposition \ref{pro:InitialUnifL} entails that the uniformity of convergence in measure is weaker than the weak uniformity for the mapping $\int \circ \|\cdot\|$ of ${\bf L}(\Omega,E)$ into $\mathbb{C}$. In particular, $(f_n)_n$ is also Cauchy for the uniformity of convergence in measure. The completeness of ${\bf M}(\Omega,E)$ allows us to find $f \in {\bf M}(\Omega,E)$ such that $f_n \stackrel{\mu}{\rightarrow} f$. Now, $\left(f_n\right)_n$ being Cauchy in ${\bf L}(\Omega,E)$, the set $\{f_n \mid n\}$ is totally bounded under the weak uniformity for the mapping $\int \circ \|\cdot\|$ of ${\bf L}(\Omega,E)$ into $\mathbb{C}$ and thus, by Proposition \ref{pro:PropertiesUnifIntSets}, $\{f_n \mid n\} \in \mathcal{I}(\Omega,E)$. Finally, an application of Corollary \ref{cor:Vit} reveals that $f \in {\bf L}(\Omega,E)$ and $\int \|f - f_n\| \rightarrow 0$. 
\end{proof}

\end{document}